\journal{Nonlinear Analysis: Real World Applications}
\newtheorem{theorem}{Theorem}
\newtheorem{corollary}[theorem]{Corollary}
\newtheorem{remark}[theorem]{Remark}
\newenvironment{proof}[1][Proof]{\textbf{#1.} }{\hfill \raisebox{-0.1em}{$\Box$}\\}
\begin{document}
\begin{frontmatter}
\title{Analysis of Dynamic Pull-in Voltage of a Graphene MEMS Model}
\author[ad1]{Piotr Skrzypacz\corref{mycorr}}
\cortext[mycorr]{Corresponding author}
\ead{piotr.skrzypacz@nu.edu}
\address[ad1]{School of Science and Technology, Nazarbayev University, 
53 Kabanbay Batyr Ave., Astana, 010000, Kazakhstan}
\author[ad2]{Shirali Kadyrov}
\address[ad2]{Suleyman Demirel University, Kaskelen, 040900, Kazakhstan}
\author[ad3,ad1]{Daulet Nurakhmetov}
\address[ad3]{Department of Information System, S. Seifullin Kazakh Agrotechnical University, Zhenis Ave. 62, Astana, 
010011, Kazakhstan}
\author[ad1]{Dongming Wei}
\begin{abstract}
Bifurcation analysis of dynamic pull-in for a lumped mass model is presented. The restoring force of the spring is derived based on the nonlinear constitutive stress-strain law and the driving force of the mass attached to the spring is based on the electrostatic Coulomb force, respectively. The analysis is performed on the resulting nonlinear spring-mass equation with initial conditions. The necessary and sufficient conditions for the existence of periodic solutions are derived analytically and illustrated numerically. The conditions for bifurcation points on the parameters associated with the second-order elastic stiffness constant and the voltage are determined.
\end{abstract}
\begin{keyword}
MEMS\sep~ graphene\sep~ pull-in\sep~nonlinear oscillator\sep~bifurcation\sep~periodic solution\sep~singularity.\\
\MSC[2010] 37G15 
\sep 34C05 
\sep 34C15 
\end{keyword}
\end{frontmatter}

{\bf Declarations of interest: none}\\
\section{{I}ntroduction}\label{sec:1}
Pull-in effect occurs in operations of electrostatic controlled micro-electro-mecha\-nical systems (MEMS).
The Pull-in voltage analysis of electrostatically actuated device is
very important for the efficient operation and reliability of the device.
The analysis of the dynamic pull-in voltage of linear materials for MEMS models has been well-established in literature, see, e.g., \cite{Younis}. It is well-known that static pull-in phenomenon occurs when the electrostatic force balances the restoring force at around one-third of the distance between the actuating plate and the base substrate corresponding to linear restoring force.
Numerous research papers have supported these results both experimentally and numerically, see e.g. \cite{ganji1,ganji2}. For a general review of the some comprehensive results on study of pull-in phenomenon and stability analysis in MEMS applications, see, e.g., \cite{Zhang}.
The wonder material graphene has been considered to be an excellent  material candidate for electrostatic MEMS devices.\\[2ex]
 However, it is shown that even for small strains, graphene behaves nonlinearly which results in a nonlinear restoring force in the corresponding lumped-mass models, see e.g. \cite{cadelano, Lee, Lu, malina}. The first mass-spring model for an electrostatically actuated device has been introduced by Nathanson et al.\cite{natanson}. For the mass-spring system, Zhang et al. \cite{Zhang} specify the dynamic pull-in and describe it as the collapse of the moving structure caused by the combination of kinetic and potential energies. In general,
the dynamic pull-in requires a lower voltage to be triggered compared to the static pull-in threshold, see \cite{flores,Zhang}. 

There are relatively few analytical results for lumped-mass models based on nonlinear restoring forces, and it is the purpose of this paper to provide analysis of dynamic pull-in for the lumped-mass model based on the nonlinear elastic behavior of graphene. This is done analytically and exact and explicit formulas for the dynamic pull-in voltages of the nonlinear system are obtained, see Theorem~\ref{thm1} and its corollary.\\ 
\noindent\fbox{\begin{minipage}{\dimexpr\textwidth-4\fboxsep-2\fboxrule\relax}
{\bf Nomenclature}\\[2ex]
\begin{tabular}{ll}
$A,A_c $ & area of the plate and cross-sectional area of the graphene strip\\
$E,{\mathcal{E}}$ & Young's modulus and the energy\\
$d$ & the gap between the moving plate and the substrate plate\\
$D$ & the second-order elastic stiffness constant\\
$F_{res}$ & restoring force of the nonlinear spring\\
$F_C$ & the electrostatic pulling force\\
$K$ & force parameter in the lumped mass model\\
$L$ & length of the graphene strip\\
$m$ & mass of the plate\\
$T$ & time period\\
$t, t_{pull-in} $ & time and pull-in time \\
$V_{DC}$ & voltage\\
$V_{pull-in}$ & pull-in voltage\\
$x$& axial displacement of the plate\\
$x^*$& normalized axial displacement of the plate\\
$x_{max}$& amplitude of the periodic wave\\
$t$ & time\\
$t*$ & normalized time\\
$\varepsilon$ & axial strain\\
$\varepsilon_0$ & electric emissivity\\
$\sigma$ & axial stress\\
$\sigma_{max}$ & the ultimate yield  stress\\
$\alpha$& restoring force parameter\\
\end{tabular}
\end{minipage}}\\[2ex]
In fact, we show that there is a dichotomy: either the initial value problem has a periodic solution or pull-in occurs. The condition which separates periodic solutions from pull-in are demonstrated in terms of the operating voltage, the nonlinear material parameters, the associated geometric dimensions, and the initial conditions. To the best of our knowledge there is no such kind of results for the systems with nonlinear restoring forces. Our results are novel for the system with the nonlinear restoring force resulting from the stress-strain equation for graphene.  We also obtain integral formulas for the time moment when pull-in occurs and the period of the periodic solutions when there is no pull-in.
In this work we derive the conditions for the dynamic pull-in in the case of zero initial conditions in the one degree of freedom (DOF) spring-mass system which represents the graphene-based MEMS. Our mass-spring model can be also considered as one DOF approximations to solutions of MEMS problems which usually require applications of advanced finite element solvers

In Section 2, we present the model, in Sections 3, we demonstrate the analytic pull-in conditions, in Section 4, we illustrate the analytic results numerically, and the Section 5, we draw the conclusions.
\section{{M}odel problem}
For completeness, we introduce the mathematical model describing the motion of an actuating
plate under the elastic and electrostatic Coulomb forces, presented  in \cite{dongming}.
\begin{figure}[htb]
\begin{center}
    \includegraphics[scale=0.5]{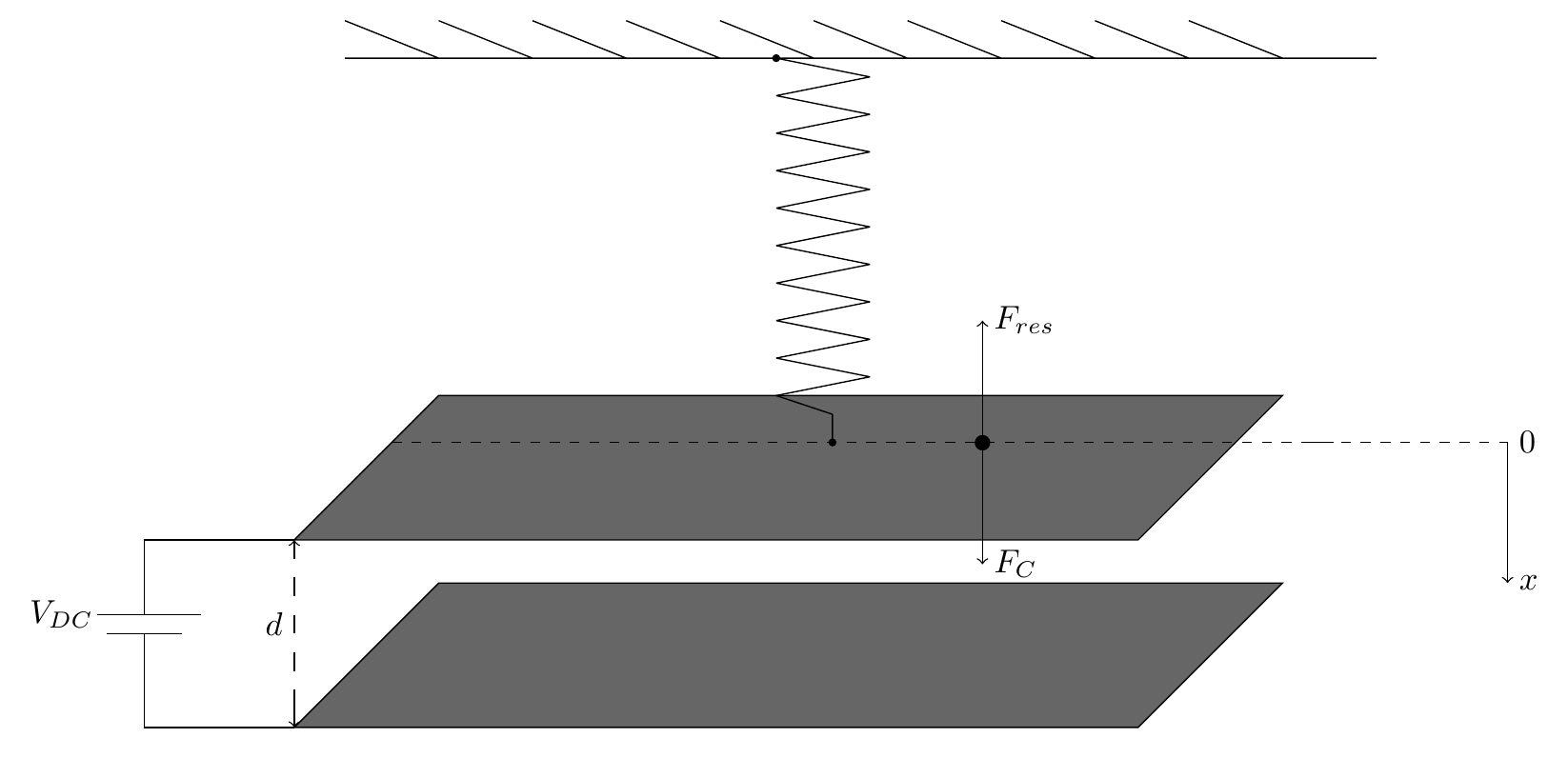}
    \caption{The parallel plate capacitor.}\label{fig_1}
  \end{center}
\end{figure}

Let $A_c$ denote the
cross-sectional area of the graphene strip whose one end is fixed and another one
is attached with a flat plate with mass $m$ and area $A$, see Fig. \ref{fig_1}. Both experimentally and theoretically it is justified in \cite{Lee, Lu} that the graphene material obeys the following constitutive equation
\begin{equation}\label{eq_constitutive_graphene}
\sigma = E\varepsilon - D|\varepsilon|\varepsilon
\end{equation}
where $\varepsilon$,  $\sigma$,  $E$ and $D$ denote the axial strain, axial stress, Young's modulus and second-order elastic stiffness constant, respectively. The constitutive equation \eqref{eq_constitutive_graphene} is valid  for $|\sigma|\le \sigma_{max}$ where the upper bound $\sigma_{max}$ is called the ultimate yield stress of graphene. It determines the second-order elastic stiffness constant as $D=\frac{E^2}{4\sigma_{max}}$. The plate with mass $m$ causes the axial displacement of the graphene based material strip. We will model the graphene strip as a flexible spring whose restoring force is related to the constitution equation \eqref{eq_constitutive_graphene} as follows
\begin{equation}
F_{res} = -EA_c\frac{x}{L}+DA_c\left|\frac{x}{L}\right|\frac{x}{L}
\end{equation}
where $x$ denotes the axial displacement from the equilibrium level of the plate, the strain is approximated by $x/L$ and $L$ is the length of the graphene strip. The moving plate is also subject to the electrostatic Coulomb force which occurs if we place another parallel substrate plate and impose the voltage $V_{DC}$, see Fig.~\ref{fig_1}. The electrostatic Coulomb force is given by
\begin{equation}
F_{C} = \frac{\varepsilon_0 AV_{DC}^2}{2(d-x)^2}
\end{equation}
where $d$ is the gap between two plates, the relative distance between the moving and fixed plates is $d-x$ and $\varepsilon_0$ is the electric emissivity. By Newton’s second law of motion, the vertical displacement variable $x$ in the lumped-mass nonlinear spring model satisfies the following nonlinear equation
\[
m \ddot{x}=F_{res}+F_{C}\,,
\]
so
\begin{equation}\label{eq_dim}
m\ddot{x}+EA_c\frac{x}{L}-DA_c\left|\frac{x}{L}\right|\frac{x}{L}=\frac{\varepsilon_0 AV_{DC}^2}{2(d-x)^2}
\end{equation}
where $\ddot{x}=\frac{d^2x}{dt^2}$. If the electrostatic force dominates significantly the restoring force, then the moving plate can approach or touch the fixed bottom plate. In this case, the so called pull-in occurs.

Let us introduce dimensionless quantities
\begin{equation}\label{eqn:less}
x^*=\frac{x}{d}\,,\quad t^*=t\sqrt{\frac{EA_c}{mL}}\,,\quad
K=\frac{\varepsilon_0 ALV_{DC}^2}{2EA_cd^3}\,,\quad \alpha=\frac{Dd}{EL}=\frac{Ed}{4L\sigma_{max}}\,.
\end{equation}
For simplicity of notation we omit the asterisks. Then, we obtain the following dimensionless form of \eqref{eq_dim}
\begin{equation}\label{eq_nondim}
\ddot{x}+x-\alpha |x|x = \frac{K}{(x-1)^2}\,,
\end{equation}
where $K\ge 0$ and $\alpha\ge 0$. We prescribe the following initial conditions $x(0)=x_0$, $\dot{x}(0)=x_0'$ where $x_0<1$. The initial value problem \eqref{eq_nondim} can be rewritten as the first order system
\begin{equation}
\label{first_order_system}
\left\{
\begin{array}{rcl}
\dot{x} & = & y\,,\\
\dot{y} & = & -x+\alpha |x|x+\frac{K}{(1-x)^2}
\end{array}
\right.
\end{equation}
with initial values $x(0)=y(0)=0$.
The case of $\alpha=K=0$ corresponds to the classical harmonic oscillator. The case of $K=0$ and $\alpha>0$ has been discussed in \cite{Cveticanin}, and the solution can be determined in terms of elliptic Jacobi functions.
 Recently, the case of time dependent $K$ has been discussed by \cite{Torres}. The static pull-in voltage analysis for the case of $\alpha=0$ has been stated in \cite{Younis}. We notice that the value
\[
K=\frac{4}{27}
\]
from \cite{Younis} corresponds to the static pull-in voltage $V_{pull-in}=\sqrt{\frac{8kd^3}{27\varepsilon_0 A}}$, and it exceeds the value which will be determined in the following Section.\\
\begin{figure}[htb]
\begin{center}
\begin{subfigure}[htb]{0.40\textwidth}
 \begin{center}
    \includegraphics[scale=0.33]{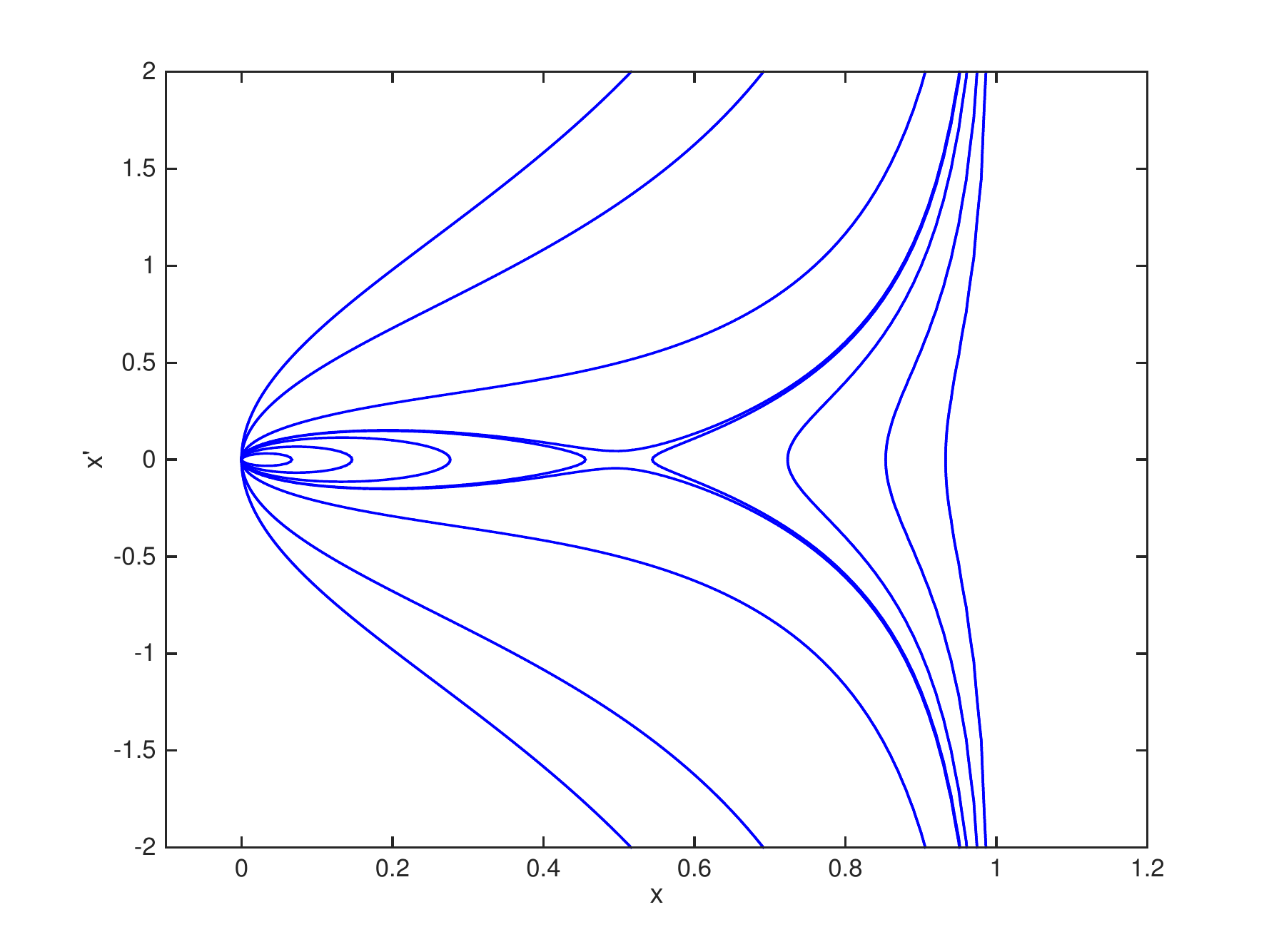}
    \caption{$\alpha=0$}
  \end{center}
   \end{subfigure}\qquad
\begin{subfigure}[htb]{0.40\textwidth}
 \begin{center}
    \includegraphics[scale=0.33]{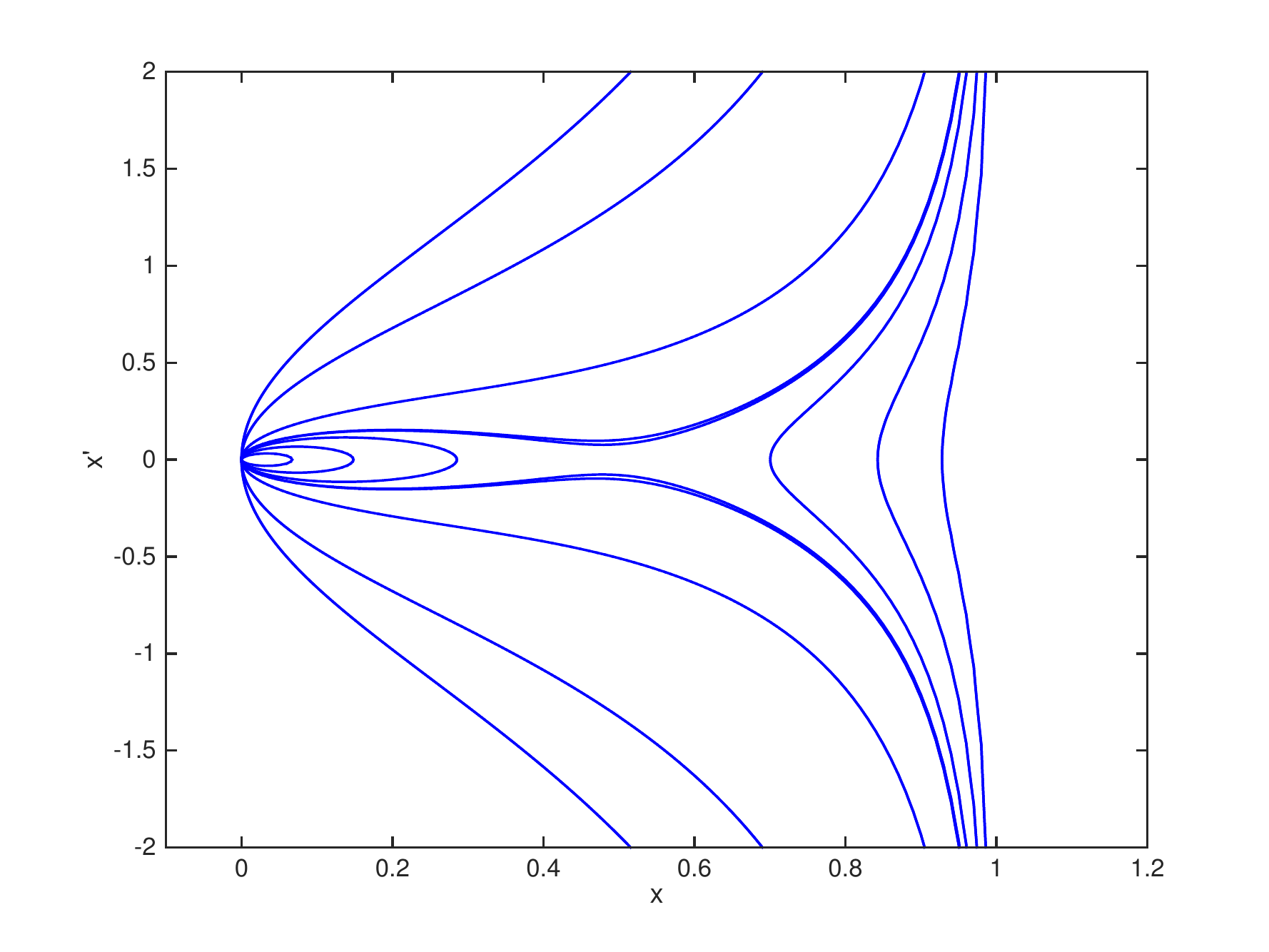}
    \caption{$\alpha=0.1$}
  \end{center}
\end{subfigure}\\[3ex]
\begin{subfigure}[htb]{0.40\textwidth}
  \begin{center}
    \includegraphics[scale=0.33]{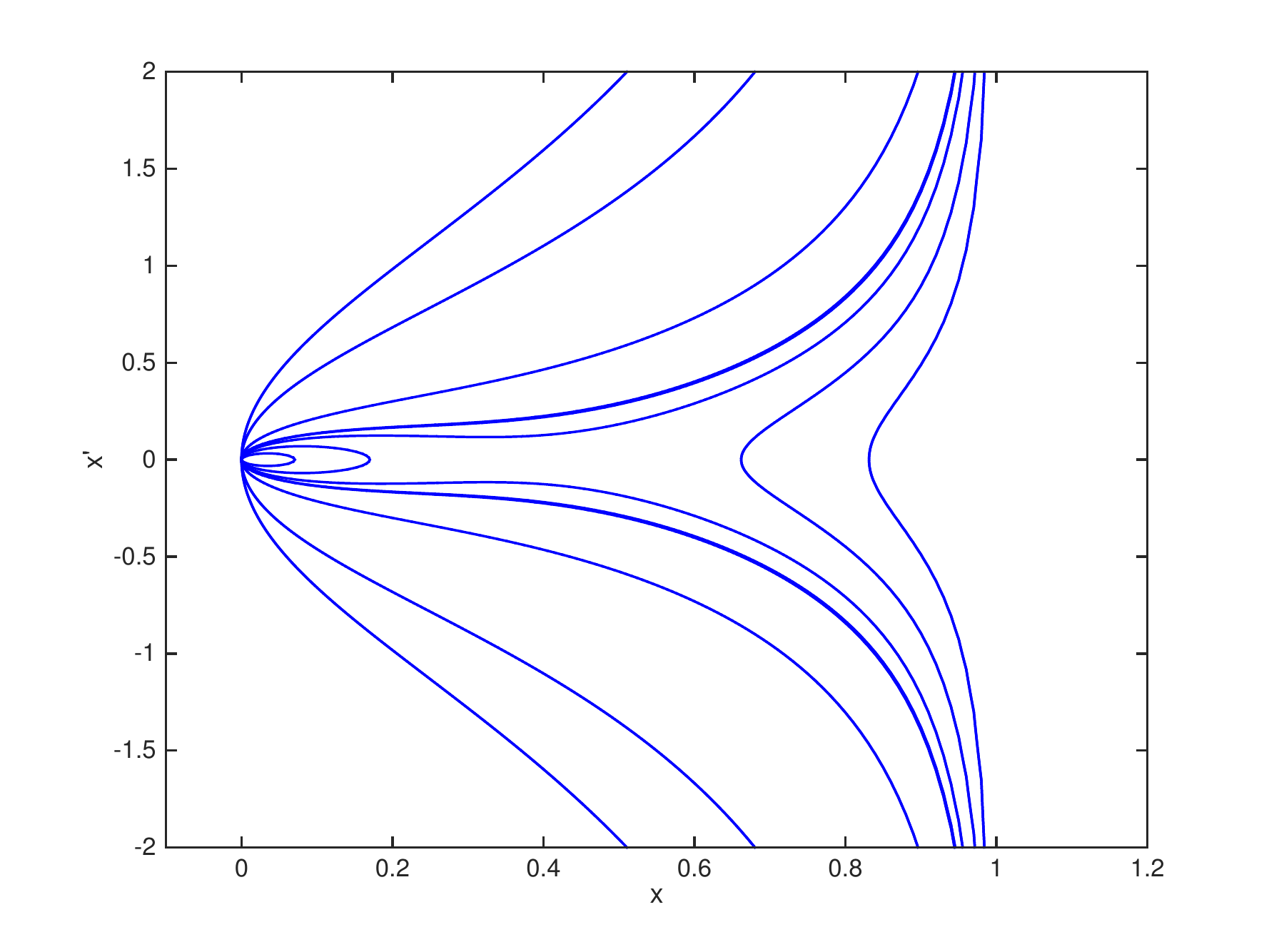}
    \caption{$\alpha=1$}
  \end{center}
\end{subfigure}\qquad
\begin{subfigure}[htb]{0.40\textwidth}
  \begin{center}
    \includegraphics[scale=0.33]{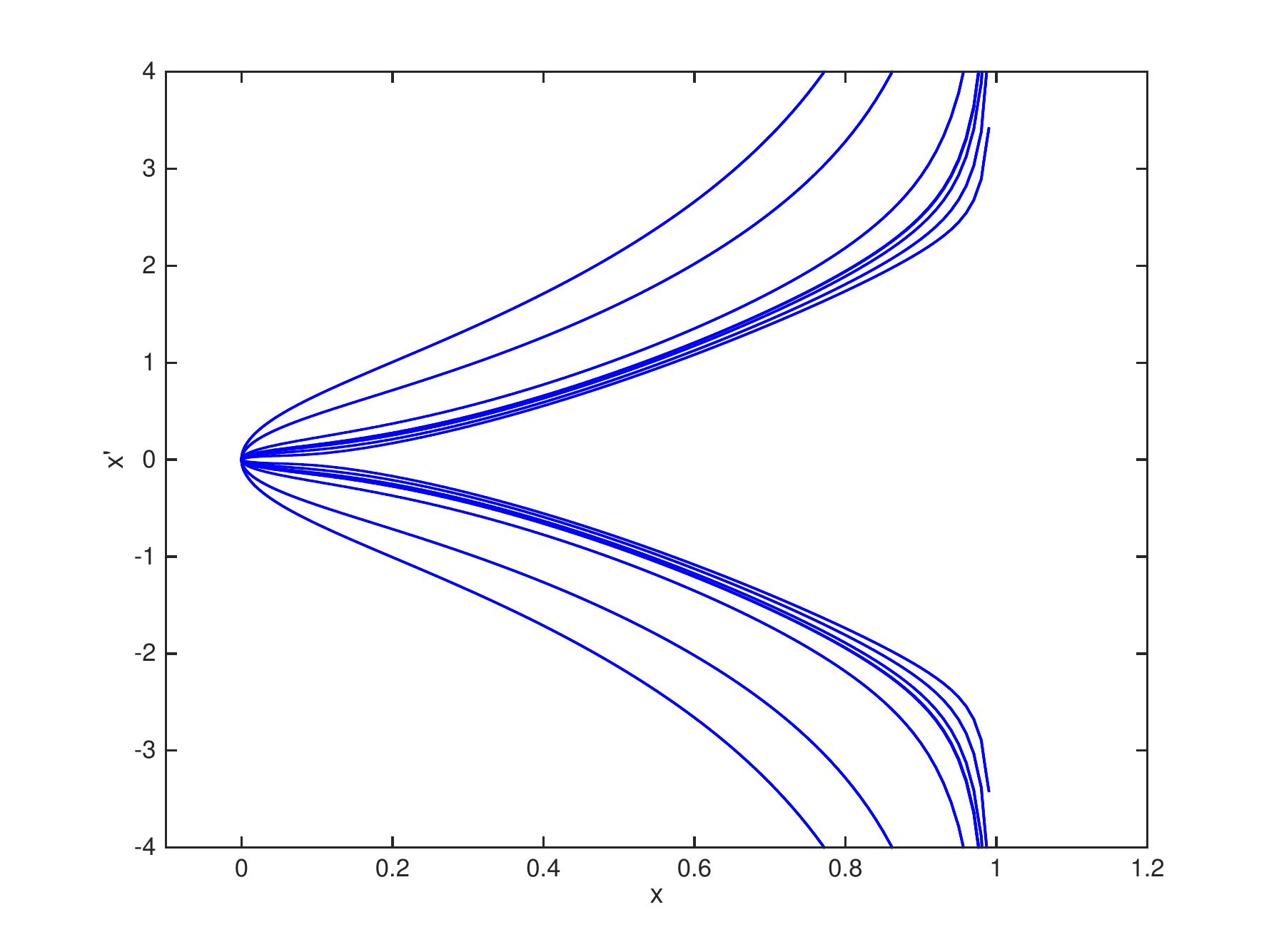}
    \caption{$\alpha=10$}
  \end{center}
\end{subfigure}
	\caption{Phase trajectories for $K=2$, $1$, $0.25$, $0.126$, $0.124$, $0.1$, $0.0625$, $0.03125$ and different values of $\alpha$.}\label{fig_phase_diagrams}
  \end{center}
\end{figure}
\section{Analysis of dynamic pull-in voltage}
Multiplying \eqref{eq_nondim} by $\dot{x}$ and integrating with respect to time, we get the conservation of energy
\[
{\mathcal{E}}(t)=\frac{1}{2}(\dot{x}(t))^2+\frac{1}{2}x^2(t)-\frac{1}{3}\alpha |x(t)|x^2(t)-\frac{K}{1-x(t)}\,,
\]
i.e.,
\begin{equation*}
\label{eq_energy}
\frac{1}{2}(\dot{x})^2+\frac{1}{2}x^2-\frac{1}{3}\alpha |x|x^2-\frac{K}{1-x} = C
\end{equation*}
where $C=\frac{1}{2}(x'_0)^2+\frac{1}{2}x_0^2-\frac{1}{3}\alpha |x_0|x_0^2-\frac{K}{1-x_0}$. Thus,
\begin{equation}\label{eq_dxdt^2}
(\dot{x})^2=-x^2 + \frac{2}{3}\alpha |x|x^2+\frac{2K}{1-x}+(x_0')^2+x_0^2
-\frac{2}{3}\alpha |x_0| x_0^2-\frac{2K}{1-x_0}\,.
\end{equation}
Our analysis is based on the phase diagrams for the case $x_0=x_0'=0$. The phase diagrams for several values of $\alpha$ and $K$ are presented in Fig.~\ref{fig_phase_diagrams}. The closed orbits in phase diagrams represent periodic solutions. We see in Fig.~\ref{fig_phase_diagrams} that the periodic solutions are expected for small parameter values of $\alpha\ge 0$ and $K>0$. The following theorem determines the range of the positive parameters $\alpha$ and $K$ for which
the pull-in occurs.

\begin{theorem}\label{thm1} Let $\alpha>0$, $\mu=\sqrt{4\alpha^2-6\alpha+9}$ and
\begin{equation*}
\kappa(\alpha)=\frac{(2\alpha+3-\mu)(-4\alpha^2+24\alpha-9+2\alpha\mu+3\mu)}{648\alpha^2}\,.
\end{equation*}
The initial value problem \eqref{eq_nondim} with zero initial values has a periodic solution if and only if
$K \le \kappa(\alpha)$
whereas the pull-in occurs if $K>\kappa(\alpha)$.
\end{theorem}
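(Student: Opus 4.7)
The plan is to use energy conservation to reduce the second-order equation to an equation for $\dot{x}^{2}$ as a function of $x$, and then characterize when the trajectory can turn before reaching $x=1$. Specializing the energy identity displayed just before the theorem to $x_{0}=x_{0}'=0$ and combining $\tfrac{2K}{1-x}-2K=\tfrac{2Kx}{1-x}$, one gets on $x\in[0,1)$
\begin{equation*}
\dot{x}^{2} \;=\; \frac{2x}{1-x}\bigl[K-h(x)\bigr],
\qquad h(x) \;:=\; \frac{x(1-x)(3-2\alpha x)}{6}.
\end{equation*}
Since $\ddot{x}(0)=K>0$, the trajectory enters $x>0$ and proceeds until it hits either $x=1$ (pull-in) or the smallest $x^{\ast}\in(0,1)$ with $h(x^{\ast})=K$ (turning point). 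Such an $x^{\ast}$ exists iff $K\le\max_{[0,1]}h$, which I intend to identify with $\kappa(\alpha)$.

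In the turning-point case I would argue that the orbit closes. For $K<\kappa(\alpha)$ the root $x^{\ast}$ is simple, so $\dot{x}$ changes sign there, and the phase-plane curve is symmetric about the $x$-axis; the travel-time integral $\int_{0}^{x^{\ast}}dx/|\dot{x}|$ is finite because the factor $x$ in $\dot{x}^{2}$ tames the singularity at $0$ and $x^{\ast}$ is a simple root. At $(0,0)$ the first-order system \eqref{first_order_system} gives $(\dot{x},\ddot{x})=(0,K)\ne(0,0)$, so the loop is a regular closed orbit and the solution is periodic. In the opposite case, $K>\kappa(\alpha)$ means $K-h(x)$ stays bounded away from $0$ on $[0,1]$, and a direct estimate of $\int_{0}^{1}\sqrt{(1-x)/(2x(K-h(x)))}\,dx$, whose only singularities at $x=0$ and $x=1$ are of integrable square-root type, yields a finite pull-in time.

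It then remains to evaluate $\kappa(\alpha):=\max_{[0,1]}h$. The equation $h'(x)=0$ reduces to $6\alpha x^{2}-2(2\alpha+3)x+3=0$, whose discriminant (after dividing by $4$) is exactly $(2\alpha+3)^{2}-18\alpha=4\alpha^{2}-6\alpha+9=\mu^{2}$. The two real roots are $x_{\pm}=[(2\alpha+3)\pm\mu]/(6\alpha)$; a short sign check using $h(0)=h(1)=0$ and $h'(0)=1/2>0$ shows that $x_{-}\in(0,1)$ and is always the interior maximum, while $x_{+}$ is either outside $[0,1]$ or is an interior local minimum. To avoid a messy cubic substitution I would use the critical-point identity $6\alpha x_{m}^{2}=2(2\alpha+3)x_{m}-3$ to eliminate $x_{m}^{3}$ from $h$, collapsing the expression to $h(x_{m})=x_{m}\bigl[6-(2\alpha+3)x_{m}\bigr]/18$; then substituting the explicit $x_{m}$ and simplifying the inner factor $36\alpha-(2\alpha+3)^{2}+(2\alpha+3)\mu=-4\alpha^{2}+24\alpha-9+2\alpha\mu+3\mu$ produces exactly the stated $\kappa(\alpha)$.

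The two places I expect to be delicate are the algebraic reduction yielding the closed form of $\kappa(\alpha)$ (direct expansion is unpleasant, and the degree-reduction via the critical-point relation is the key trick), and the borderline $K=\kappa(\alpha)$, at which $x_{m}$ is a double root of $K-h$ and $\dot{x}^{2}\sim c(x-x_{m})^{2}$ near $x_{m}$. Strictly, the orbit then reaches $x_{m}$ only as $t\to\infty$ and is heteroclinic rather than closed; but $x$ still stays uniformly bounded away from $1$, so no pull-in occurs, consistent with the theorem's convention of grouping the equality with the periodic regime.
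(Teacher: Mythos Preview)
Your approach is essentially the paper's: both reduce the question, via the energy identity, to whether a cubic in $s$ has a root in $(0,1)$ --- the paper's $h_{\alpha,K}(s)=-\tfrac{2}{3}\alpha s^{3}+(\tfrac{2}{3}\alpha+1)s^{2}-s+2K$ is exactly $2\bigl(K-h(s)\bigr)$ in your notation --- then locate the relevant critical point from the same quadratic $6\alpha s^{2}-2(2\alpha+3)s+3=0$ and evaluate there to get $\kappa(\alpha)$. Your write-up is actually more complete than the paper's short proof: you supply the degree-reduction identity $h(x_{m})=x_{m}\bigl[6-(2\alpha+3)x_{m}\bigr]/18$ that the paper leaves implicit, you justify finiteness of the travel-time integrals, and you correctly observe that at the borderline $K=\kappa(\alpha)$ the orbit is heteroclinic to the saddle $(x_{m},0)$ rather than genuinely periodic, a subtlety the paper does not address.
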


\begin{proof}
	As in \eqref{first_order_system}, we can think of \eqref{eq_dxdt^2} as a system of first order differential equations by letting $y=\dot x$. Then, the solution $x(t)$ is periodic if and only if the phase diagram, $x$ vs. $y$, produces a closed curve. With zero initial conditions, $x_0=x_0'=0$, this means that it is necessary and sufficient for the energy equation \eqref{eq_dxdt^2} to have a closed curve. This is the case when
\[
f_{\alpha,K}(s):=-s^2 + \frac{2}{3}\alpha s^3+\frac{2K}{1-s}-2K
=\frac{s(-\frac{2}{3}\alpha s^3+(\frac{2}{3}\alpha +1)s^2-s+2K)}{1-s}
\]
has a root in $(0,1)$, see Fig.~\ref{fig_phase_diagrams} for illustration. Note that $f_{\alpha,K}>0$ when $s$ approaches to $0$ and $1$ in $(0,1)$. Hence, due to Mean Value theorem, the existence of a root in $(0,1)$ is equivalent to existence of a local minimum of $f_{\alpha,K}$ in $(0,1)$ that is at most 0.   
Therefore, $h_{\alpha,K}(s)= -\frac{2}{3}\alpha s^3+(\frac{2}{3}\alpha +1)s^2-s+2K$ must have a non-positive minimum at some $s\in (0,1)$. To find the critical points, we compute $h'_{\alpha,K}(s)=-2\alpha s^2+2(\frac{2}{3}\alpha+1)s-1$. Then, considering the second derivative, we see that $h_{\alpha,K}$ attains its minimum at the smallest critical point $s_1=(2\alpha/3+1-\sqrt{(2\alpha/3+1)^2-2\alpha})/(2\alpha)$ which is in $(0,1)$ and we must have $h_{\alpha,K}(s_0) \le 0.$ 
\begin{figure}[htb]
\begin{center}
    \includegraphics[scale=0.7]{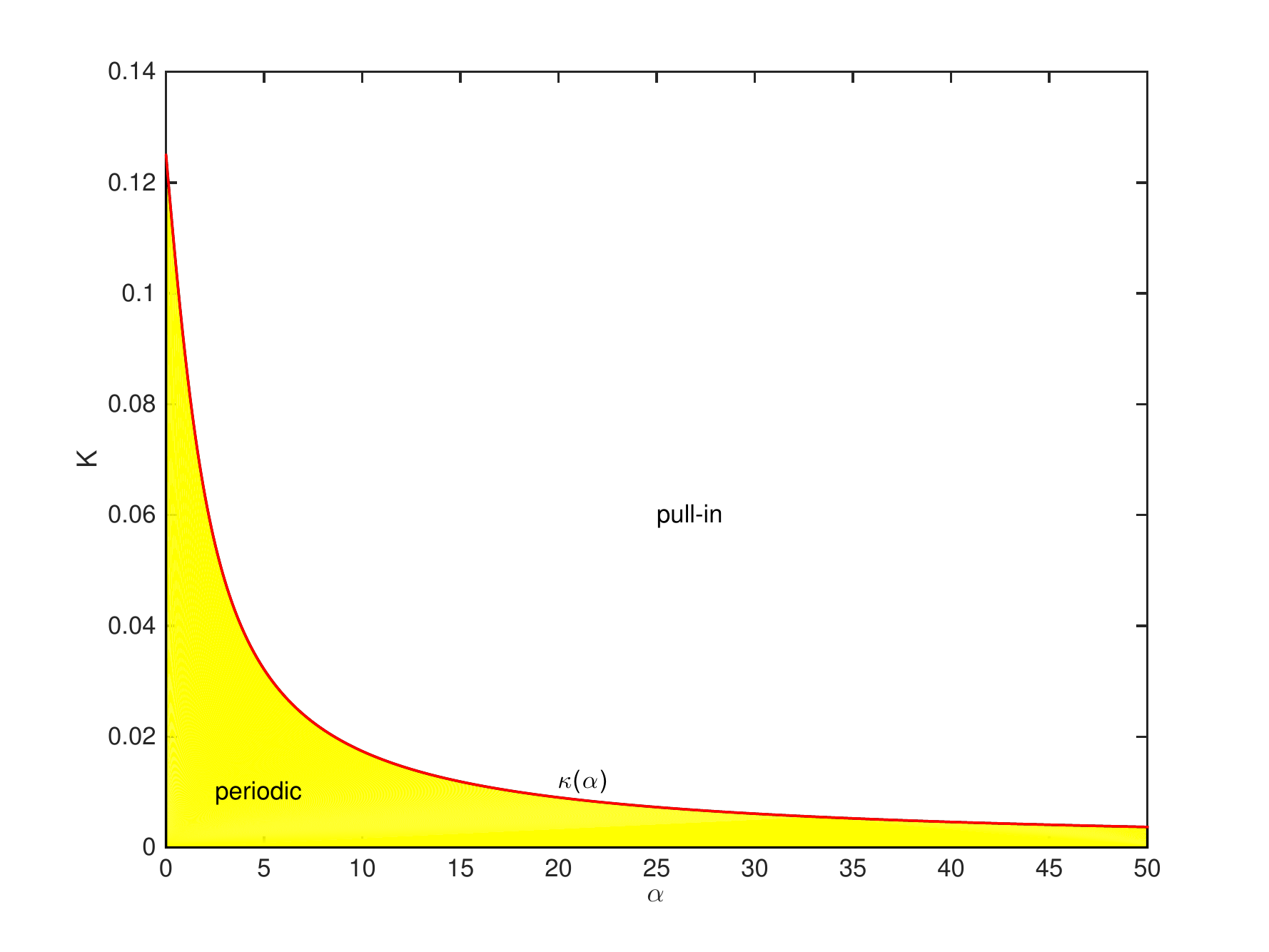}
\caption{Values of parameters $\alpha>0$ and $K$ for pull-in and periodic solutions in the case of $x_0=x_0'=0$.}\label{fig_periodic}
  \end{center}
\end{figure}
This results in
\begin{equation*}
\begin{split}
K&\le -\frac{1}{648\alpha^2}(-2\alpha-3+\sqrt{4\alpha^2-6\alpha+9})\times\\
&\qquad \times (-4\alpha^2+24\alpha-9+2\alpha\sqrt{4\alpha^2-6\alpha+9}+3\sqrt{4\alpha^2-6\alpha+9})\,.
\end{split}
\end{equation*}
The condition for pull-in solutions is equivalent to $h_{\alpha,K}(s_1)>0$.
\end{proof}

As an immediate consequence of Theorem~\ref{thm1} we obtain the exact formula for dynamic pull-in DC voltage.
\begin{corollary}
The pull-in voltage for the lumped-mass nonlinear spring model \eqref{eq_dim} satisfies
$$V_{pull-in}=\sqrt{\frac{2E A_c d^3\kappa(\alpha)}{\varepsilon_0 A L}},$$
where $\alpha=\frac{Dd}{EL}$ and $\kappa(\alpha)$ as in Theorem~\ref{thm1}.
\end{corollary}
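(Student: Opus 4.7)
The plan is short because the corollary is essentially an algebraic restatement of Theorem~\ref{thm1} in the physical voltage variable $V_{DC}$. First I would invoke Theorem~\ref{thm1}: for each fixed $\alpha > 0$, the sharp threshold of the dimensionless parameter separating periodic motion from pull-in is precisely $K = \kappa(\alpha)$. Consequently, the dynamic pull-in voltage is defined as the unique positive value $V_{pull-in}$ of $V_{DC}$ for which the associated dimensionless number $K$ attains this threshold.

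Next I would substitute the nondimensionalization from~\eqref{eqn:less}. Since $\alpha = Dd/(EL)$ has no dependence on $V_{DC}$, while
\[
K = \frac{\varepsilon_0 A L V_{DC}^2}{2 E A_c d^3}
\]
is a strictly increasing function of $V_{DC}^2$, the equation $K = \kappa(\alpha)$ has a unique positive solution for $V_{DC}$. Solving it gives
\[
V_{pull-in}^2 = \frac{2 E A_c d^3 \kappa(\alpha)}{\varepsilon_0 A L},
\]
and extracting the positive square root yields the stated formula.

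The only point that warrants a brief check is that $\kappa(\alpha) \ge 0$, so that the square root is real and the physical interpretation is meaningful. I would verify this directly from the factored form: the first factor satisfies $2\alpha + 3 - \mu > 0$ because $(2\alpha+3)^2 - \mu^2 = 18\alpha > 0$ for $\alpha > 0$, and the second factor is nonnegative by the same critical-point analysis used in the proof of Theorem~\ref{thm1} (it is a positive multiple of $-h_{\alpha,0}(s_1)$, the borderline value corresponding to a double root of $h_{\alpha,K}$ in $(0,1)$). Beyond this positivity check there is no analytical obstacle; the substance of the corollary lies entirely in the preceding theorem, and the derivation is a one-line algebraic rearrangement.
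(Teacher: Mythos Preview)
Your proof is correct and follows exactly the paper's approach: invoke Theorem~\ref{thm1} to identify the threshold $K=\kappa(\alpha)$ and then invert the definition of $K$ in \eqref{eqn:less} to recover $V_{DC}$. The paper's own proof is a single sentence (``This obviously follows from Theorem~\ref{thm1} and identities in \eqref{eqn:less}''), so your positivity check for $\kappa(\alpha)$ is additional detail rather than a missing step.
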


\begin{proof}
This obviously follows from Theorem~\ref{thm1} and identities in \eqref{eqn:less}.
\end{proof}
\begin{remark}
Let $K^0_{pull-in}=\lim\limits_{\alpha\to 0^+} \kappa(\alpha$). We have
$K^0_{pull-in}= 1/8$, so we recovered the case of linear spring, i.e. $\alpha=0$. The solution to problem \eqref{eq_nondim} with initial values $x_0=x_0'=0$ is periodic if $K<K^0_{pull-in}$, and the pull-in occurs if $K>K^0_{pull-in}$.
\end{remark}
The values $(\alpha,K)$ for which the initial value problem \eqref{eq_dxdt^2} has periodic or pull-in solutions are presented in Fig.~\ref{fig_periodic}. In the linear case $\alpha=0$ we have pull-in if $K>K^0_{pull-in}$ with $K^0_{pull-in}=\frac{1}{8}$ which is smaller than the value $K=\frac{4}{27}$ for the static pull-in obtained by \cite{Younis}. Notice that for $\frac{4}{27}>K>\frac{1}{8}$ the pull-in still occurs. The general case of arbitrary initial values $x_0\ge 0$ and $x_0'$ will be discussed in forthcoming works.
\begin{figure}[htb]
\begin{center}
\begin{subfigure}[htb]{0.40\textwidth}
 \begin{center}
    \includegraphics[scale=0.33]{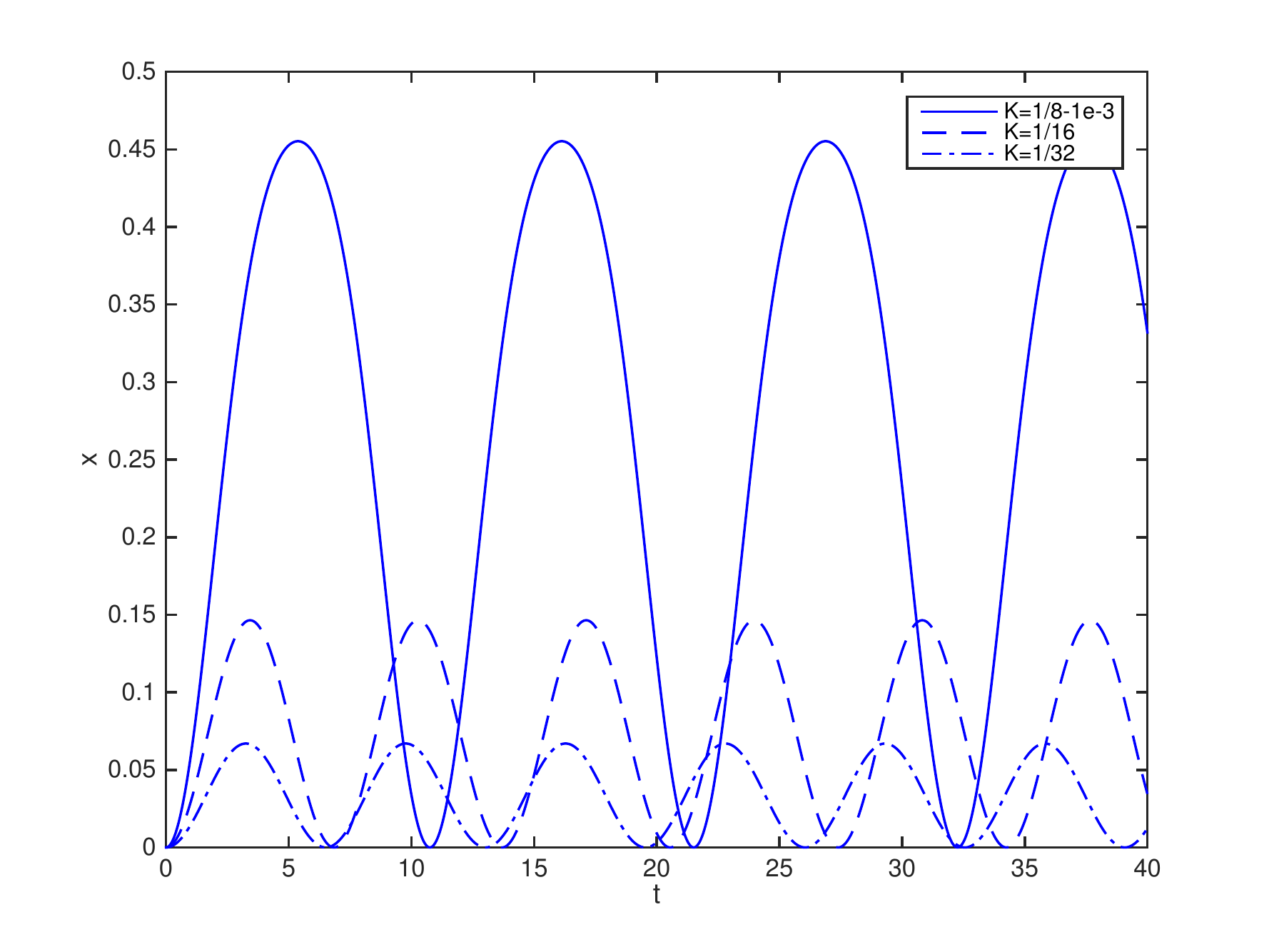}
    \caption{$\alpha=0$}
  \end{center}
   \end{subfigure}\qquad
\begin{subfigure}[htb]{0.40\textwidth}
 \begin{center}
    \includegraphics[scale=0.33]{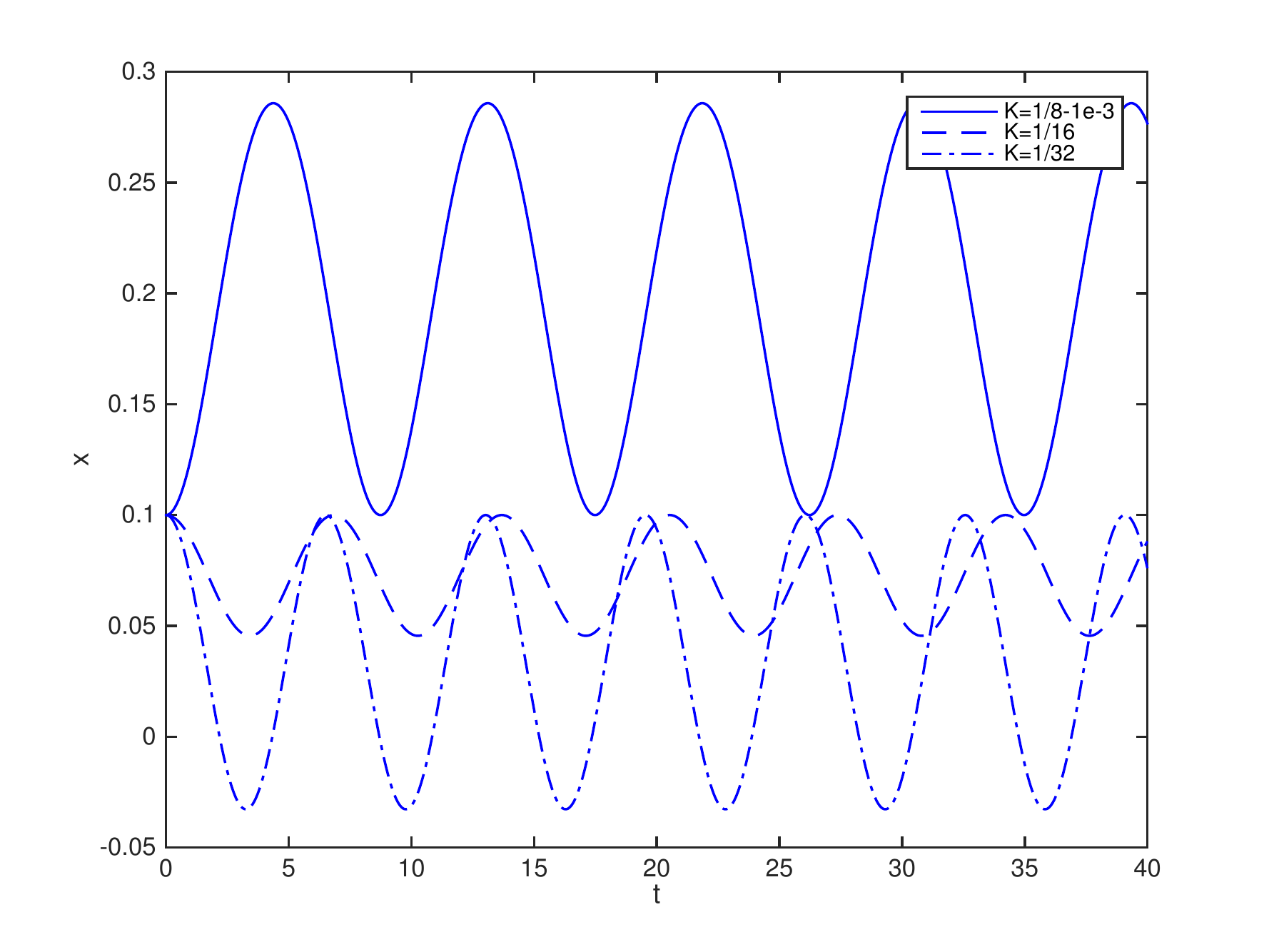}
    \caption{$\alpha=0$, $x_0=0.1$, $x_0'=0$ }
  \end{center}
\end{subfigure}\\[2ex]
\begin{subfigure}[htb]{0.40\textwidth}
  \begin{center}
    \includegraphics[scale=0.33]{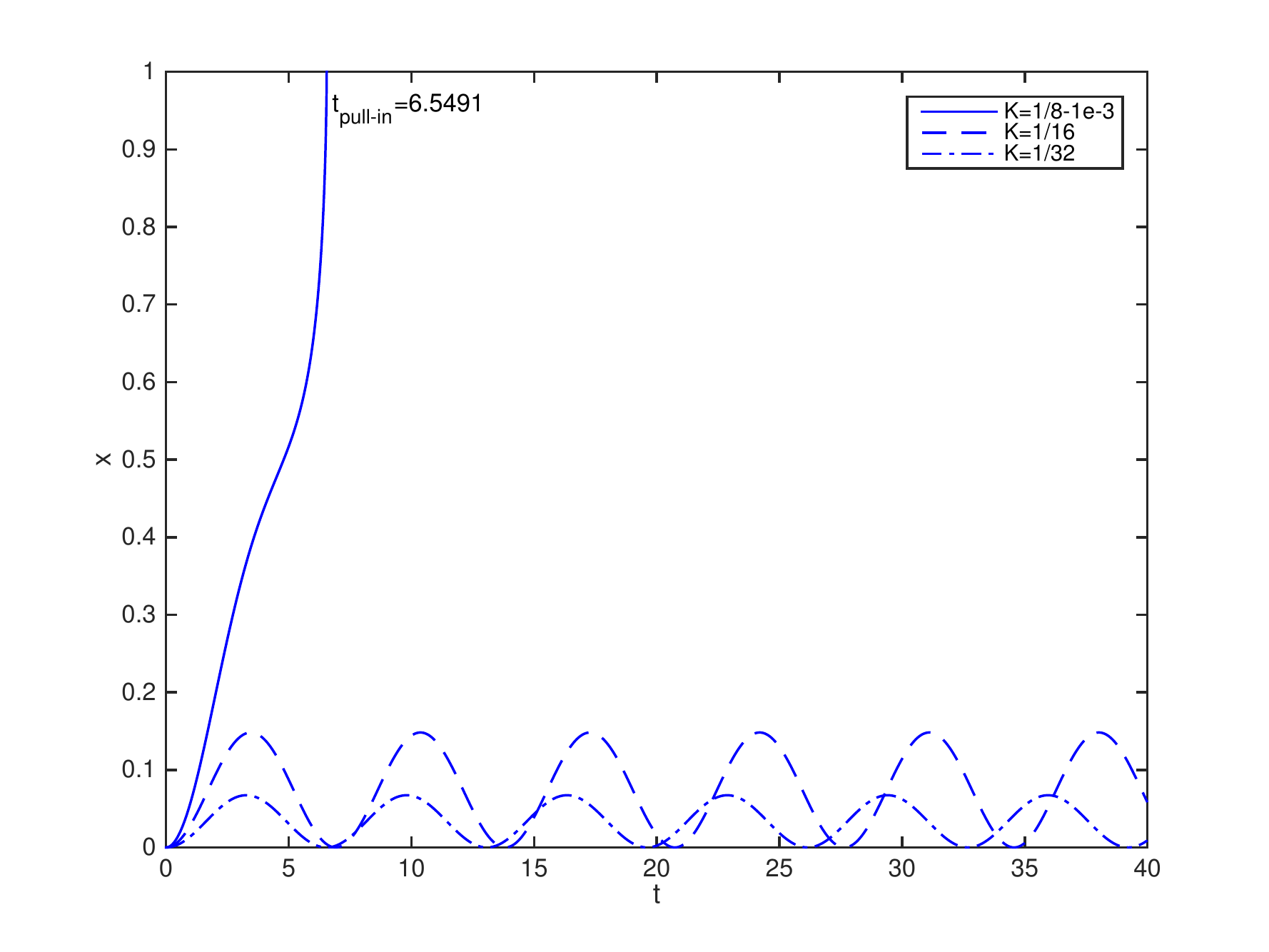}
    \caption{$\alpha=0.1$}
  \end{center}
\end{subfigure}\qquad
\begin{subfigure}[htb]{0.40\textwidth}
  \begin{center}
    \includegraphics[scale=0.33]{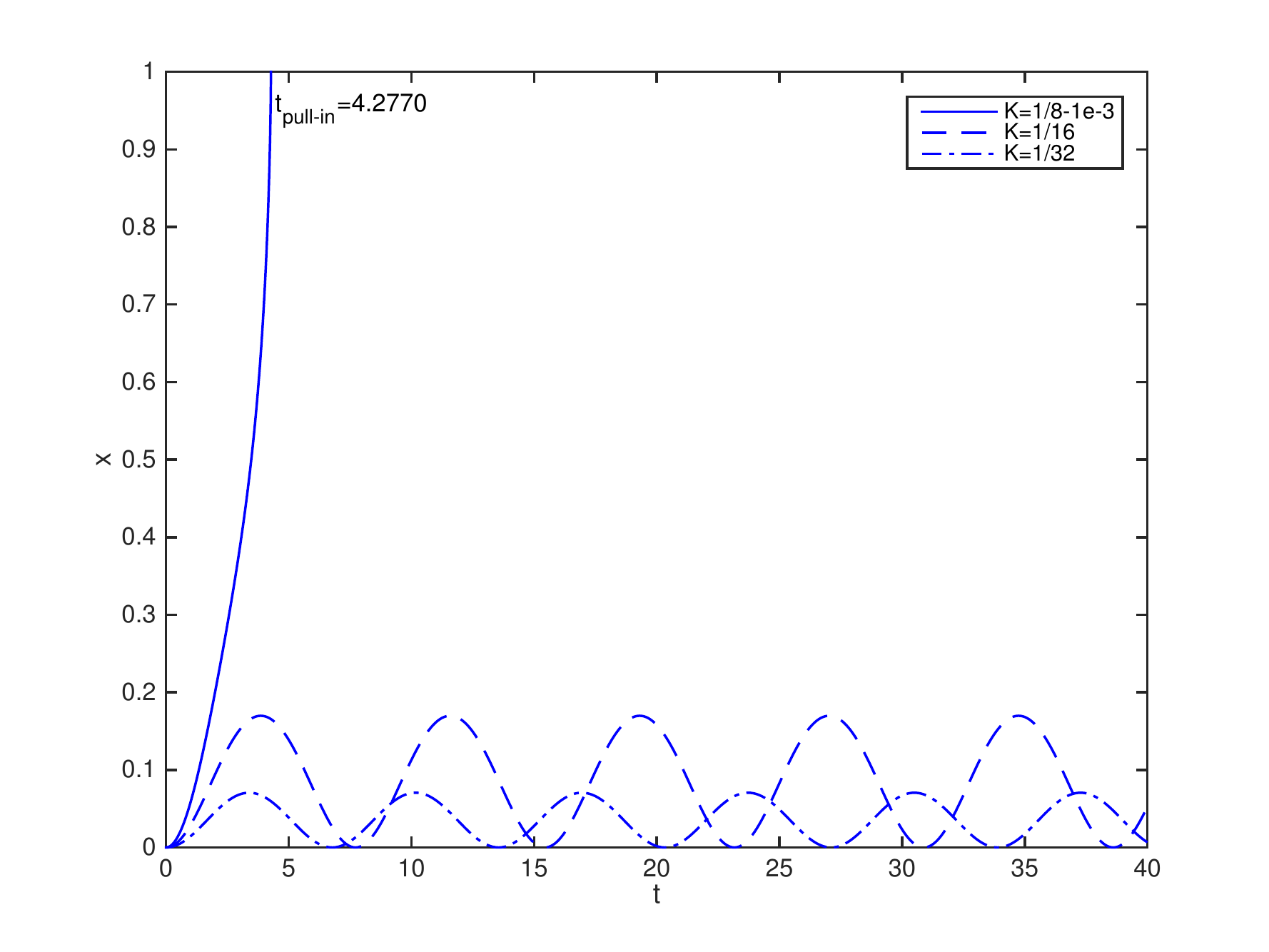}
    \caption{$\alpha=1$}
  \end{center}
\end{subfigure}
\caption{Solution profiles for different values of $K$ and $\alpha$.}\label{fig_plots}
  \end{center}
\end{figure}
The exact pull-in time $t_{pull-in}$ and the period $T$ can be computed as follows
\begin{equation*}
t_{pull-in}=\int\limits_{0}^1\frac{ds}{\sqrt{-s^2+\frac{2}{3}\alpha s^3+\frac{2K}{1-s}-2K}}
\end{equation*}
and
\begin{equation*}
T=\int\limits_{0}^{x_{max}}\frac{2ds}{\sqrt{-s^2+\frac{2}{3}\alpha s^3+\frac{2K}{1-s}-2K}}\,,
\end{equation*}
where $x_{max}\in (0,1)$ is the root of $h_{\alpha,K}(s)$. We notice that $x_{max}$ also represents the amplitude. In the case of linear spring, we have
\begin{equation*}
t_{pull-in}=\int\limits_{0}^1\frac{ds}{\sqrt{-s^2+\frac{2K}{1-s}-2K}}\,,\quad K>\frac{1}{8}\,,
\end{equation*}
and
\begin{equation*}
T=\int\limits_{0}^{\frac{1}{2}-\frac{1}{2}\sqrt{1-8K}}\frac{2ds}{\sqrt{-s^2+\frac{2K}{1-s}-2K}},\quad 0<K<\frac{1}{8}\,.
\end{equation*}
The above values are verified by numerical experiments in the next Section.
\section{{N}umerical experiments}
In order to justify the predicted behavior of the solutions $x(t)$ to the model problem \eqref{eq_nondim},
we employed the standard Matlab ODE solver \texttt{'ode23s'} using the following options for high accuracy numerical 
solutions 
\begin{center}
\texttt{'options = odeset('RelTol',1e-10,'AbsTol',[1e-12 1e-12])'}.
\end{center}
This ODE solver is based on the embedded Runge-Kutta method, see \cite{Shampine}.
\begin{figure}[htb]
\begin{center}
\begin{subfigure}[htb]{0.42\textwidth}
 \begin{center}
    \includegraphics[scale=0.36]{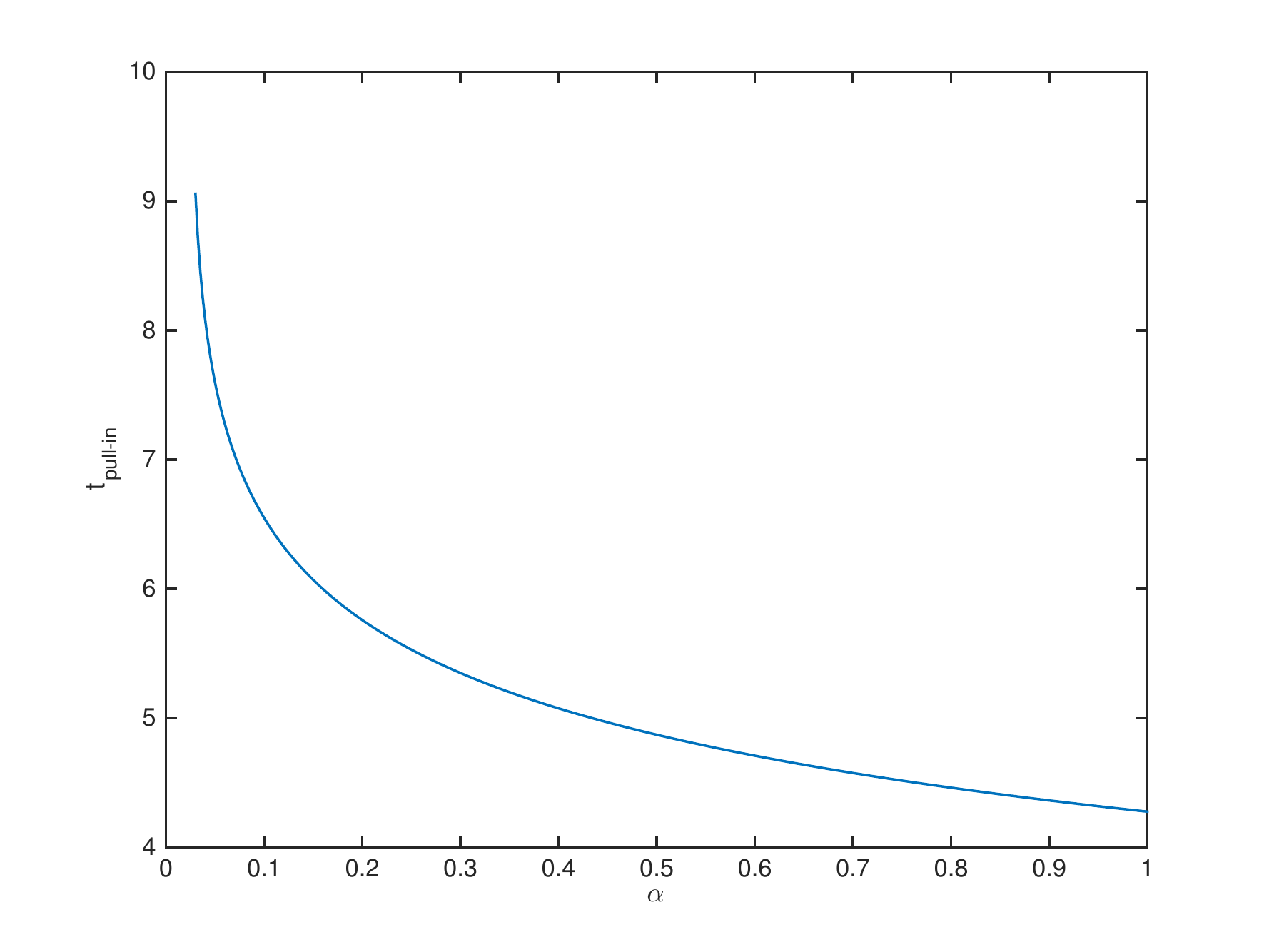}
    \caption{$K=0.124$}
  \end{center}
   \end{subfigure}\qquad
\begin{subfigure}[htb]{0.32\textwidth}
 \begin{center}
    \includegraphics[scale=0.36]{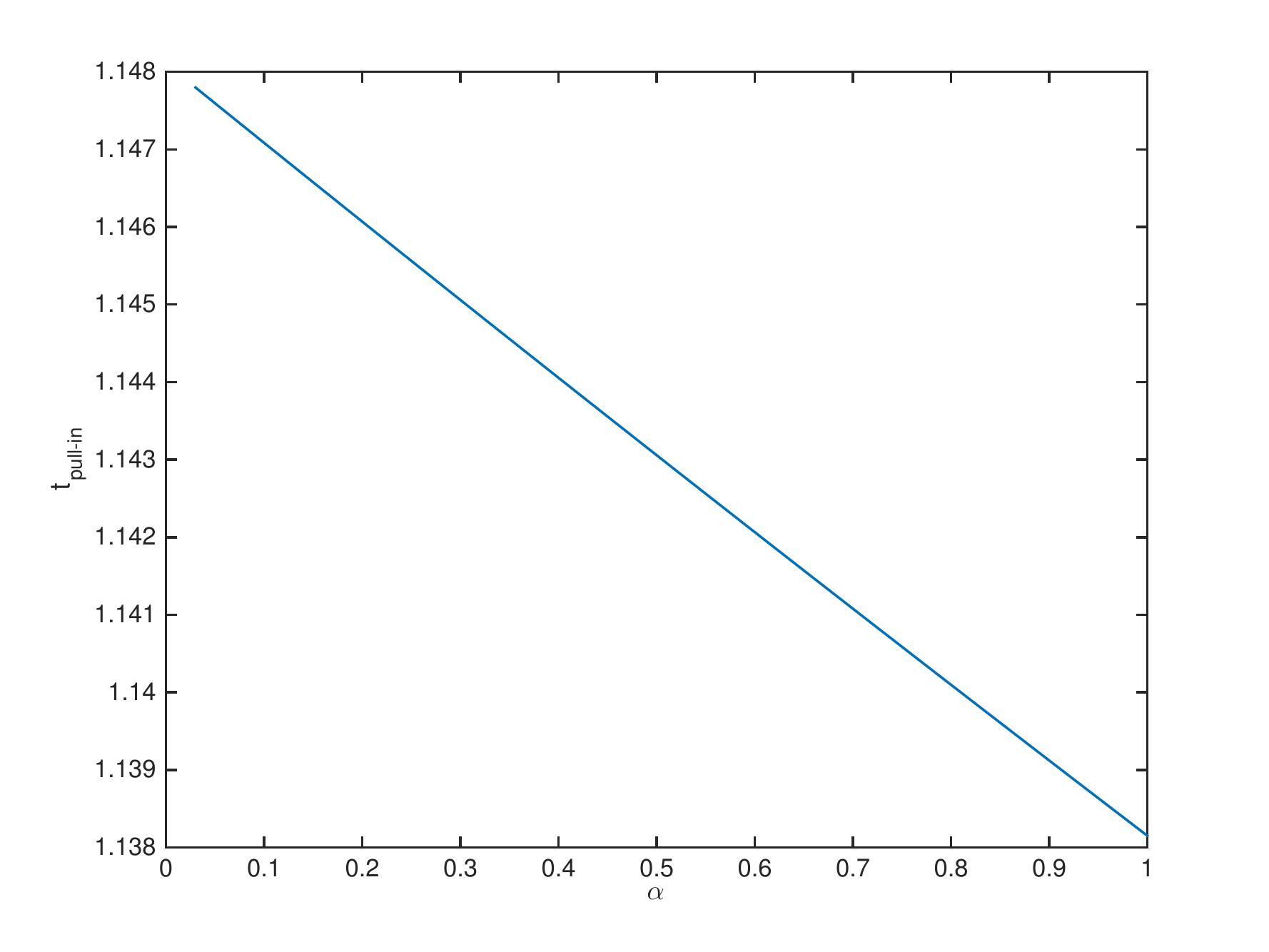}
    \caption{$K=1$ }
  \end{center}
\end{subfigure}
\caption{Pull-in time for $\alpha\in [0.03,1]$ and $K=0.124,\; 1$}. \label{fig_pull_in}
  \end{center}
\end{figure}
Several solution profiles $x(t)$ for different sets of parameter values are presented in Fig.~\ref{fig_plots}, where the periods and pull-in times depend on the parameters $\alpha$, $K$ and the initial values. 
In Fig.~\ref{fig_pull_in} we numerically demonstrated how the pull-in time decreases for $K=0.124$ and $K=1$ when the parameter $\alpha\in [0.03,1]$ increases. 
\section{{C}onclusions} Pull-in conditions for lumped-mass models subject to the electrostatic force with the nonlinear restoring force arising from the constitutive stress-strain equation for graphene are obtained. Specific conditions for pull-in phenomenon to occur in the model are presented analytically in terms of the operating voltage, the nonlinear material parameters, the geometric dimensions, and the initial conditions. Numerical illustrations of the analytic solutions are also presented. The results obtained in this work are novel and can be useful for design of some MEMS made of graphene.\\

\paragraph{{\bf Acknowledgment}}{~This research was supported by the Nazarbayev University ORAU grant ``Modeling and Simulation of Nonlinear Material Structures for Mechanical Pressure Sensing and Actuation Applications``.}

\section*{References}
\bibliography{pull_in}

\end{document}